\numberwithin{equation}{section}
\newcommand{\bbT}{\mathbb T}
\newcommand{\cE}{\mathcal E}
\newcommand{\N}{\mathbb{N}}
\newcommand{\R}{\mathbb{R}}
\newcommand{\T}{\mathbb{T}}
\newcommand{\Z}{\mathbb{Z}}
\newcommand{\be}{\begin{equation}}
\newcommand{\ee}{\end{equation}}
\renewcommand{\Im}{{\ensuremath{\mathrm{Im}}}}
\newcommand{\supp}{{\ensuremath{\mathrm{supp}}}}
\newtheorem{theorem}{Theorem}[section]
\newtheorem{hypothesis}[theorem]{Hypothesis}
\newtheorem{remark}[theorem]{Remark}
\newtheorem{example}[theorem]{Example}
\date{\today}
\begin{document}
\large
\title{
On the discrete spectrum of   two-particle discrete Schr\"odinger operators}
\author{Z.~I.~Muminov}

\address{
{ Samarkand State University, University Boulevard 15, 703004,
Samarkand (Uzbekistan)} E-mail:~zimuminov@mail.ru}

\address{$^3$ Samarkand division of Academy of sciences of
Uzbekistan (Uzbekistan) }

\begin{abstract}
 In the present paper our aim  is  to
explore some spectral properties of the family two-particle discrete
Schr\"odinger operators $h^{\mathrm{d}}(k)=h^{\mathrm{d}}_0(k)+ \mathbf{v},$ $k\in \T^\mathrm{d},$ on the $\mathrm{d}$ dimensional lattice $\Z^{\mathrm{d}},$ $\mathrm{d}\geq 1,$ $k$ being the two-particle  quasi-momentum.
Under some condition  in the case $k\in
\T^{\mathrm{d}}\setminus (-\pi,\pi)^{\mathrm{d}},$ we establish
necessary and sufficient conditions for existence of infinite discrete spectrum of the operator $h^{\mathrm{d}}(k)$.
\end{abstract}

\maketitle
Subject Classification: {Primary: 81Q10, Secondary: 35P20, 47N50}

Key words and phrases: discrete
Schr\"odinger operator, Friedrichs model,
 infinitely many eigenvalues, discrete spectrum.
\section{Introduction.}

Some spectral properties of the two-particle Schr\"odinger operators
{are studied} in \cite{AbLak,AlLakAb,AbLakUS,ALMM,ALzM,Faria}.

The fundamental difference between multiparticle discrete and
continuous Schr\"o\-din\-ger operators is that in the discrete case the
analogue of the Laplasian $-\Delta$ is not rotationally invariant.

 Due to this fact, the  Hamiltonian of a system does not separate into two parts, one relating to the center-of-motion and other one to the internal degrees of freedom. In particular,  such a handy characteristics of inertia
as mass is not available. Moreover, such a natural local
substituter as the effective mass-tensor (of a ground state)
depends  on the quasi-momentum of the system and, in addition, it
is only semi-additive
 (with respect to the partial order on the set of positive
definite matrices). This is the so-called {\it excess mass}
 phenomenon for lattice systems
 (see, e.g., \cite{Mat} and  \cite{Mog}):
 the effective mass of the bound state
of an $N$-particle system is greater than (but, in general,
 not equal to) the sum of the effective
masses of the constituent quasi-particles.

The two-particle problem on lattices can be reduced to an effective
one-particle problem by using the Gelfand transform instead: the underlying Hilbert space
$\ell^2((\Z^\mathrm{d})^2)$  is decomposed {to a} direct
von Neumann integral, associated with the representation of the
discrete group $\Z^\mathrm{d}$  by the shift operators on the
lattice and then the total two-body Hamiltonian appears to be
decomposable as well. In contrast to the continuous case, the
corresponding  fiber Hamiltonians $h^{\mathrm{d}}(k) $ associated
with  the direct decomposition depend parametrically on the internal
binding $k$, the quasi-momentum, which  ranges over a cell of the
dual lattice. As a consequence, due to the loss of the spherical
symmetry of the problem,  the spectra of the family
$h^{\mathrm{d}}(k)$ turn out to be  rather sensitive to the
variation of the quasi-momentum $k,$ $k\in \T^\mathrm{d}\equiv
(-\pi,\pi]^\mathrm{d}$.

For example, in \cite{AbLak,ALzM,Faria} it is established that the
discrete Schr\"odinger operator $h^3(k),$ with a zero-range
attractive potential, for all non-zero values of the quasi-momentum
$0\ne k \in \T^3,$ has a unique eigenvalue below the essential
spectrum if  $h^{\mathrm{d}}(0)$  has a virtual level (see
\cite{AbLak,ALzM}) or an eigenvalue below its essential spectrum
(see  \cite{Faria}).

The similar result is obtained in  \cite{ALMM}, which is the
(variational) proof of  existence of the discrete spectrum  below
the bottom of the essential spectrum of the fiber Hamiltonians
$h^{\mathrm{d}}(k)$  for all non-zero values of the quasi-momentum
$0\ne k \in \T^{\mathrm{d}},$ provided that  the Hamiltonian
 $h^\mathrm{d}(0)$  has either a virtual level
 (in dimensions  of three and four) or a
 threshold eigenvalue
 (in all dimensions $\mathrm{d}\ge 5$).
 We emphasize that the authors of \cite{ALMM}
  considered   more general class of
  two-particle discrete Hamiltonians interacting via
a   short-range {pair}  potentials.

  It is also worth to mention that  two-particle discrete
 operator $h^\mathrm{3}(k),$  for some
values of the quasi-momentum $k\ne 0$  may generate a ``rich''
infinite discrete   spectrum outside its essential spectrum and
these eigenvalues {accumulate} at the
edge of its essential spectrum (see \cite{AlLakAb}).

In this paper, we explore the some spectral properties of some
$\mathrm{d}$-dimensional two-particle discrete Schr\"odinger operator
$h^\mathrm{d}(k)=h^\mathrm{d}_0(k)+\mathbf{v}, \, k\in \T^{\mathrm{d}}$, $k$ being
 the two-particle quasi-momentum (see \eqref{h}), corresponding to the energy operator
$H^{\mathrm{d}}$ of the system of two quantum particles moving on
the $\mathrm{d}$-dimensional lattice $\mathbb{Z}^{\mathrm{d}}$  with the
short-range potential $\mathbf{v}$. In the case $k\in
\T^{\mathrm{d}}\setminus (-\pi,\pi)^{\mathrm{d}},$ we establish
necessary and sufficient conditions for existence of infinite number
eigenvalues of the operator $h^{\mathrm{d}}(k)$ (Theorem \ref{main3}).

 We now describe the organization of the present
paper. In Sec. 2 describe  the two-particle Hamiltonians  in  the coordinate representation, introduce the two-particle
quasi-momentum, and decompose the energy operator into the von
Neumann direct integral of the  fiber Hamiltonians $h^{\mathrm{d}}(k)$, thus
providing the reduction to the effective one-particle case

In Section 3 in case the particles have (no) equal masses
we obtain  the finiteness of the discrete spectrum of
$h^{\mathrm{d}}(k)$  for all $( k \in \T^{\mathrm{d}})$
$k\in \T^{\mathrm{d}}\setminus (-\pi,\pi)^{\mathrm{d}}$,
 provided that   the Hamiltonian
 $h^\mathrm{d}(0)$  has finite discrete spectrum (Theorem \ref{h(0)<h(k)})
 and  prove
the main result of this  paper, Theorem \ref{main3}, in the case
$k\in \T^{\mathrm{d}}\setminus (-\pi,\pi)^{\mathrm{d}}$.

In Appendix \ref{Appendix I}, for readers convenience, in the case $k\in M=\T^{\mathrm{d}}\setminus (-\pi,\pi)^{\mathrm{d}}$  we give an example that
  the infiniteness of the discrete spectrum of $h^{\mathrm{d}}(k)$
 depends on $k\in M$.

In order to facilitate a description of the content of this paper,
we briefly introduce the notation used throughout this manuscript.
Let $\Z^\mathrm{d}$ be the $\mathrm{d}$-dimensional lattice and
$\T^{\mathrm{d}}$ be the $\mathrm{d}$-dimensional torus, the cube
$(-\pi,\pi]^{\mathrm{d}}$ with appropriately  identified sides,
$\mathrm{d}\ge 1.$ We remark that the torus $\T^{\mathrm{d}}$ will always be considered as an abelian
group with respect to the addition and multiplication by the real
numbers { regarded as} operations on $\R^{\mathrm{d}}$ modulo $(2\pi \Z)^{\mathrm{d}}$.
Denote by $L^2((\T^{\mathrm{d}})^m)$ the Hilbert space of square-integrable
functions defined on  $(\T^{\mathrm{d}})^m$.

\section{The description of the two-particle operator}
The free Hamiltonian $ H^\mathrm{d}_{0}$ of two quantum particles
on the $\mathrm{d}$-dimensional lattice
 $\mathbb{Z}^{\mathrm{d}}$ usually associated with the following
self-adjoint operator in the Hilbert space
 $\ell^2((\Z^{\mathrm{d}})^2)$ of $\ell^2$-sequences $f(x),x\in (\Z^{\mathrm{d}})^2$:
 \begin{equation*}
  H^\mathrm{d}_{0}=-\frac{1}{2m_1}\Delta_{x_1}-\frac{1}{2m_2}\Delta_{x_2},
\end{equation*}
with
\begin{equation*}
\Delta_{x_1}={\Delta}\otimes I_\mathrm{d}, \quad
\Delta_{x_2}=I_\mathrm{d} \otimes{\Delta} ,
\end{equation*}
where $I_\mathrm{d}$ is the identical operator on
$\ell^2(\Z^{\mathrm{d}})$, $m_1, m_2>0$  are meaning
of masses of particles and $\Delta$ is standard discrete Laplasian:
\begin{equation*}
    (\Delta f)(x)= \frac{1}{2}
    \sum_{j=1}^\mathrm{d}
(2f(x)-f(x+\vec{e}_j)-f(x-\vec{e}_j)),\quad f\in\ell _2(\Z^{\mathrm{d}}),
\end{equation*}
that is,
\begin{equation*}\label{Lap}
    \Delta=\frac{1}{2} \sum_{j=1}^\mathrm{d}
(2I_\mathrm{d}-T(\vec{e}_j)-T^*(\vec{e}_j)).
\end{equation*}

Here $T(y)$ is the shift operator by $y\in \Z^{\mathrm{d}}$:
$$
(T(y)f)(x)=f(x+y),\quad f\in\ell _2(\Z^{\mathrm{d}}),
$$
 and
 $\vec{e}_j,$ $j=1,\dots,\mathrm{d},$ is  the unit vector along the $j$-th direction of $\Z^{\mathrm{d}}$.

\begin{remark}
  For more general definitions of discrete  Laplasians see \cite{ALMM},\cite{Yaf1}.
\end{remark}
\begin{remark}\label{rem}
Note that the free Hamiltonian $ H^\mathrm{d}_{0}$ is a
multi-dimensional Laurent-Toeplitz
 type operator defined
 by the function $E(\cdot,\cdot):(\T^\mathrm{d})^2\to \R^1$:
  $$
 E(k_1,k_2)=\frac{1}{m_1}\varepsilon(k_1)+
 \frac{1}{m_2}\varepsilon(k_2),\quad k_1,k_2\in \T^{\mathrm{d}},
 $$
 where
 \begin{equation}\label{cos}
 \varepsilon(p)=\sum_{i=1}^\mathrm{d}(1-\cos p^{(i)}),\,p=(p^{(1)},\dots,p^{(\mathrm{d})})\in \T^{\mathrm{d}}.
 \end{equation}
\end{remark}

 { The last claim can
be obtained by the Fourier transform $
{\mathcal{F}}:\ell^2(({\Z}^{\mathrm{d}})^2)\rightarrow
L^2((\T^{\mathrm{d}})^2) $ which gives the 
unitarity of  $
 H^\mathrm{d}_{0}$ and the multiplication operator
 on $L^2((\T^{\mathrm{d}})^2)$ by the function $E(\cdot,\cdot).$}
It is easy to see that the free Hamiltonian $ H^\mathrm{d}_0$ is a
bounded operator and its spectrum coincides with the interval
$[0,\,\mathrm{d}(m_1^{-1}+m_2^{-1})].$

The total Hamiltonian  $  H^\mathrm{d}$ (in the coordinate
representation) of the system of the two quantum particles  moving on
$\mathrm{d}$-dimensional lattice $\mathbb{Z}^{\mathrm{d}}$ with
the real-valued pair interaction $ V$ is a self-adjoint bounded
operator in the Hilbert space $\ell^2((\Z^{\mathrm{d}})^2)$ of
the form
\begin{equation*}
 H^\mathrm{d} = H^\mathrm{d}_{0}+ {V},
 \end{equation*}
where \begin{equation*}
({V} f)(x_1,x_2) ={{v}(x_1
-x_2)f(x_1,x_2)},\quad f \in \ell^2((\Z^{\mathrm{d}})^2),
\end{equation*}
with $ v:\Z^{\mathrm{d}}\to \R$ a bounded function.

As we note above the Hamiltonian $ H^\mathrm{d}$ does not split into a sum of a
center of mass and relative kinetic energy as in the
continuous case using the center of mass and relative coordinates.

Using the
direct  von Neumann integral decomposition the
 two-particle problem on lattices can be reduced to an effective
one-particle problem. Here we shortly recall some results of \cite{Yaf1} related to direct
integral decomposition.

The energy operator $ H^{\mathrm{d}}$ is obviously commutable with
the group of translations $ U_s$, $s\in \Z^{\mathrm{d}}$:
\begin{equation*}
( U_s  f )(x_1,x_2)=f (x_1+s,x_2+s),\quad f \in
\ell^2((\Z^{\mathrm{d}})^2),\quad x_1, x_2 \in \Z^{\mathrm{d}},
\end{equation*}
for any $s\in \Z^{\mathrm{d}}.$

One can check that the group $ \left\{U_s\right\}_{s\in
\Z^{\mathrm{d}}}$ is the unitary representation of the abelian group
$\Z^{\mathrm{d}}$ in the Hilbert space
$\ell^2(({\Z}^{\mathrm{d}})^2)$.

 { Consequently, we  can decompose the Hilbert space
 $\ell^2((\Z^{\mathrm{d}})^2)$ in a diagonal for
the operator $ H^{\mathrm{d}}$ direct integral whose fiber is
parameterized by $k\in \T^{\mathrm{d}}$ and consists of functions on
$\ell^2(({\Z}^{\mathrm{d}})^2)$ satisfying the condition}
\begin{equation*}\label{ff}
(U_s f)(x_1,x_2)=\exp(-i(s,k))f(x_1,x_2),\quad s\in
\Z^{\mathrm{d}},\,k \in \T^{\mathrm{d}}.
\end{equation*}

Here the parameter $k$ is naturally interpreted  as the \emph{total
quasi-momentum } of the two-particles and is called the two-particle
\emph{quasi-momentum}.

More precisely, let us introduce the  mapping $F:
\ell^2((\Z^{\mathrm{d}})^2)\to \ell^2(\Z^{\mathrm{d}})\otimes
L^2(\T^{\mathrm{d}})$ by the equality
\begin{equation*}\label{F}
    (Ff)(x_1,k)=\frac{1}{(2\pi)^{\frac{\mathrm{d}}{2}}}
    \sum_{s\in \Z^{\mathrm{d}}} e^{-\mathrm{i}(s,k)}f(x_1+s,s),\quad f \in
    \ell^2((\Z^{\mathrm{d}})^2).
\end{equation*}

Remark that $F$ is the unitary  mapping of the space
$\ell^2((\Z^{\mathrm{d}})^2)$ onto $\ell^2(\Z^{\mathrm{d}})\otimes
L^2(\T^{\mathrm{d}})$ and its adjoint
$F^{*}:\ell^2(\Z^{\mathrm{d}})\otimes L^2(\T^{\mathrm{d}}) \to
\ell^2((\Z^{\mathrm{d}})^2) $
 defined  as
\begin{align*}\label{FF}
    (F^{*} g)&(x_1,x_2)
    =\frac{1}{(2\pi)^{\frac{\mathrm{d}}{2}}}
    \int\limits_{k'\in \T^{\mathrm{d}}}
    e^{-\mathrm{i}(x_2,k')}g(x_1-x_2,k')dk',
    \quad g \in
\ell^2(\Z^{\mathrm{d}})\otimes L^2(\T^{\mathrm{d}}).
\end{align*}

 It
follows that $\hat H_0^{\mathrm{d}}=F H_0^{\mathrm{d}} F^{*} $
acts in $\ell^2(\Z^{\mathrm{d}})\otimes L^2(\T^{\mathrm{d}})$ as
a multiplication by the operator function $h_0^{\mathrm{d}}(k),$
$k\in \T^{\mathrm{d}},$ acting in the Hilbert space
$\ell^2(\Z^{\mathrm{d}})$ by
\begin{align*} 
 h_0^{\mathrm{d}}(k)=\frac{1}{2}
  \sum_{j=1}^{\mathrm{d}}
  \big({2}\mu(0)I_\mathrm{d}- \mu(k^{(j)}) T(\vec{e}_j)-\overline{\mu(k^{(j)})} T^*(\vec{e}_j)  \big),
\end{align*}
where
$$
\mu(y)={m_1}^{-1}+{m_2}^{-1}{e^{-\mathrm{i}y}},\quad y\in \T^1.
$$

\begin{remark}\label{rem1}
Note that the operator
 $ h_0^{\mathrm{d}}(k),$ $k\in \T^{\mathrm{d}},$ also is a
multi-dimensional Laurent-Toeplitz type operator defined
 by $$E_k(p)=\frac{1}{m_1}\varepsilon(p)+
 \frac{1}{m_2}\varepsilon(k-p),\quad p\in \T^{\mathrm{d}},
 $$
 where
 $\varepsilon(\cdot)$ is defined by \eqref{cos}.
\end{remark}

Clearly, the operator $V$ commutes with $\left\{U_s\right\}_{s\in
\Z^{\mathrm{d}}}.$ As a result $FV F^{*} $ acts in
$\ell^2(\Z^{\mathrm{d}})\otimes L^2(\T^{\mathrm{d}})$ as a
multiplication operator  by the form
\begin{equation*}\label{V}
(FV F^{*}g)(x,k)= v(x)g(x,k),\quad g \in
\ell^2(\Z^{\mathrm{d}})\otimes L^2(\T^{\mathrm{d}}).
\end{equation*}

Consequently, the operator
 $\hat H^{\mathrm{d}}=F H^{\mathrm{d}} F^{*} $ acts in
$\ell^2(\Z^{\mathrm{d}})\otimes L^2(\T^{\mathrm{d}})$ as a
multiplication by the operator-function $h^{\mathrm{d}}(k),$
$k\in \T^{\mathrm{d}},$ acting in the Hilbert space
$\ell^2(\Z^{\mathrm{d}})$ in the form
\begin{align}\label{h}
 h^{\mathrm{d}}(k)=h_0^{\mathrm{d}}(k)+\mathbf{v},
\end{align}
where $\mathbf{v}$ is the multiplication operator by the function
$v(\cdot)$ in $\ell^2(\Z^{\mathrm{d}}).$

This procedure is quite similar to the "separation" of motion of the
center of mass for dispersive Hamiltonian in the continuous case.
 However, in this case the role of
$\Z^{\mathrm{d}}$ and $\T^{\mathrm{d}}$ was played by
$\R^{\mathrm{d}}.$

The form \eqref{h} for $h^{\mathrm{d}}(k)$ shows that the two-particle problem to be reduced to an effective one-particle problem.

For any $l,$ $l\in N_{\mathrm{d}}=\{1,\dots,\mathrm{d}\},$ let $\mathcal{A}_l^{\mathrm{d}}$ be the family of all ordered  set  ${\boldsymbol{\alpha}}=\{\alpha_1,\dots,\alpha_l\}\subset N_{\mathrm{d}},$ and
$\bar {\boldsymbol{\alpha}}$ be the  complement of $\boldsymbol{\alpha}\in \mathcal{A}_l^{\mathrm{d}}$ with respect to the set $N_\mathrm{d}$.

 We denote by $M^\mathrm{d}_l(\boldsymbol{\alpha})$, $\boldsymbol{\alpha}\in \mathcal{A}_l^{\mathrm{d}},$ the set of all $k\in  M=\T^{\mathrm{d}}\setminus (-\pi,\pi)^{\mathrm{d}} $ such that
only $\alpha_1$-th \dots $\alpha_l$ -th
 coordinates are equal to $\pi$, i.e
\begin{align*}
  &M^\mathrm{d}_l(\boldsymbol\alpha)\equiv \{k\in \T^{\mathrm{d}}:
  k^{(j)}=\pi,\,j\in \boldsymbol\alpha \quad \mbox{and}\quad      k^{(j)}\neq \pi,\,j\in \bar{\boldsymbol\alpha}\}.
\end{align*}

Let be $\Z^{l}({\boldsymbol\alpha})\subset \Z^{\mathrm{d}} $ be the lattice with the basis
$\{ \vec{e}_j\},\,j\in {\boldsymbol\alpha}$. Of course $\Z^{l}({\boldsymbol\alpha})$
is isomorphic to the abelian group $\Z^{l}$ and $\Z^{l}({\boldsymbol\alpha})\oplus \Z^{\mathrm{d}-l}(\bar{\boldsymbol\alpha})=\Z^{\mathrm{d}}.$

 By $\Z^{\mathrm{d}-l}_x(\bar{\boldsymbol\alpha})$ we denote the shift of $\Z^{\mathrm{d}-l}(\bar{\boldsymbol\alpha})$ by $x\in \Z^{l}({\boldsymbol\alpha})$, i.e
$$
\Z^{\mathrm{d}-l}_x(\bar{\boldsymbol\alpha})=\Z^{\mathrm{d}-l}(\bar{\boldsymbol\alpha})+x.
$$

In order to  get the main results of the paper
 we assume the
following technical hypotheses that guarantee  the finiteness of the discrete spectrum of the operator $h^{\mathrm{d}}(0)$,
corresponding to the zero value of the quasi-momentum $k,$  and  the compactness
 of the interaction $\mathbf{v}$.

\begin{hypothesis}\label{hypo}
 Here and further with regard to the function
 $v(\cdot)$ it is assumed that:\\
 (A) $\quad \lim_{x\to \infty}v(x)=0;$\\
 (B)\quad for any $l \in N_{\mathrm{\mathrm{d}-1}},$  $\boldsymbol\alpha \in \mathcal{A}_l^{\mathrm{d}}$, $x\in  \Z^{l}({\boldsymbol\alpha})$ the potential
 $\tilde{{\bf v}}$, corresponding to the restriction $\tilde{v}(\cdot)$ of the function ${v}(\cdot)$ on the set $ \Z^{\mathrm{d}-l}_x(\bar{\boldsymbol\alpha})$,
 do not produce infinite discrete spectrum for the operator
 $$
 h^{\mathrm{d}-l}(0)= h^{\mathrm{d}-l}_0(0)+\tilde{{\bf v}}.
 $$
\end{hypothesis}

\begin{remark}\label{hypoq} The real valued function $v(\cdot)$, being exponentially decreasing function, satisfies Hypothesis \ref{hypo}.
\end{remark}

\section{The formulation and proof of the main results}

Under the  Fourier transform $
{\mathcal{F}}:\ell^2({\Z}^{\mathrm{d}})\rightarrow
L^2(\T^{\mathrm{d}}) $ the operator
 $  h^\mathrm{d}(k),$ $k\in \T^\mathrm{d},$
  turns into the operator $\hat  h^\mathrm{d}(k),$ $k\in \T^\mathrm{d},$
  (the Friedrichs model)
 acting in $L^2(\T^{\mathrm{d}})$ as
 \begin{equation*}\label{ss}
\hat h^\mathrm{d}(k)=\hat h_0^\mathrm{d}(k)+\hat {\mathbf{v}},
    \end{equation*}
    where
    $\hat h_0^\mathrm{d}(k)$ is the multiplication operator by the
    function $E_k (q) $:
\begin{equation*}\label{H0(k)=}
(\hat {h}^{\mathrm{d}}_0(k)f)(q)=E_k (q)f(q),\quad f\in
L^2(\T^{\mathrm{d}}),
\end{equation*}
and $\hat {\mathbf{v}}$ is the integral operator of convolution
type:
\begin{equation*}\label{V}
( \hat {\mathbf{v}}f)(q)=(2\pi
)^{-\frac{\mathrm{d}}{2}}\int\limits_{\T^{\mathrm{d}}} \hat  v
(q-s)f(s)\mathrm{d} s,\quad f\in L^2(\T^{\mathrm{d}}).
 \end{equation*}

Here $\hat  v (\cdot)$ is the Fourier series with the Fourier
coefficients $v(s), s\in \Z^{\mathrm{d}}.$

Under assumption $(A)$ of  Hypothesis \ref{hypo} the perturbation $\mathbf{v}$ of the
operator $h^{\mathrm{d}}_0(k)$, $k\in\T^{\mathrm{d}}$,  is a
Hilbert-Schmidt operator and, therefore, according to the Weil
theorem, the essential  spectrum of the operator $h^{\mathrm{d}}(k)$
fills in the following interval on the real axis:
$$
\sigma_{\text{ess}}(h(k))= [{ \cE}_{\min}(k) ,{\cE }_{\max}(k)],
$$
where
 $$
{\cE }_{\min}(k)=\min_{q{\in }{\bbT}^{\mathrm{d}}}E_{k} (q),\quad
{\cE }_{\max} (k)=\max_{q{\in }{\bbT}^{\mathrm{d}}} E_{k} (q).
 $$


 The function $ E_k(p)$ can be rewritten in the form
\begin{equation}\label{echiladigan}
E_k(p)=\mathrm{d} \mu(0)-\sum_{j=1}^\mathrm{d} r(k^{(j)})\cos
(p^{(j)}- p(k^{(j)})),
\end{equation}
where the coefficients $r(k^{(j)})$ and $p(k^{(j)})$
are given by
$$
r(k^{(j)})=|\mu(k^{(j)})|,\quad
p(k^{(j)})=\arcsin\frac{\Im \mu(k^{(j)})}{r(k^{(j)})},\quad
k^{(j)}\in (-\pi,\pi].
$$

The equality \eqref{echiladigan} implies the following
representation for $  E_k(\cdot)$:
\begin{equation}\label{represent}
E_k(p+p(k))=\mathrm{d} \mu(0)-\sum_{j=1}^\mathrm{d} r(k^{(j)})\cos
p^{(j)} ,
\end{equation}
where
 $p(k)$ is the
vector-function
\begin{align*}
 p:\T^3\rightarrow \T^3,\,\,\,
p(k)=
 (p(k^{(1)}),\dots,p(k^{(\mathrm{d})}))\in \T^3.
\end{align*}

Then
$$
{\cE }_{\min}(k)=\mathrm{d} \mu(0)-\sum_{j=1}^\mathrm{d} r(k^{(j)}),\quad
{\cE }_{\max}(k)=\mathrm{d} \mu(0)+\sum_{j=1}^\mathrm{d} r(k^{(j)})
 $$
and
\begin{equation}\label{E ineq}
E_k(p+p(k))-{\cE }_{\min}(k)\leq E_0(p)\leq
\frac{1}{A(k)}\big ( E_k(p+p(k))-{\cE }_{\min}(k)
\big),
\end{equation}
since $r(k^{(j)})\leq r(0)$ and $ r(0)\leq  r(k^{(j)})/A(k)$,
where
\begin{equation*}\label{INf}
    A(k)=\min_{j=1,\dots,\mathrm{d}}r(k^{(j)})/r(0).
\end{equation*}
\begin{remark}
  Note that $A(k)=0$ holds and only if
$k\in \T^{\mathrm{d}}\setminus (-\pi,\pi)^{\mathrm{d}}$ and $m_1= m_2$.
\end{remark}

\subsection{ Case $A(k)\neq 0$}
\begin{theorem}\label{h(0)<h(k)}
Assume Hypothesis \ref{hypo} and for  $k\in \T^{\mathrm{d}}$ the inequality $A(k)\neq 0$ holds.  Then the operator $h^{\mathrm{d}}(k)$ has only finite discrete spectrum.
 \end{theorem}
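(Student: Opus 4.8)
The plan is to count the eigenvalues lying below $\cE_{\min}(k)$ and above $\cE_{\max}(k)$ separately, and to reduce both counts, through the two-sided bound \eqref{E ineq}, to the discrete spectrum of the zero-momentum operator $h^{\mathrm{d}}(0)$. First I would remove the upper edge by symmetry. The unitary $W$ on $L^2(\T^{\mathrm{d}})$ given by the translation $q\mapsto q+(\pi,\dots,\pi)$ (equivalently, multiplication by $(-1)^{x_1+\dots+x_{\mathrm{d}}}$ in the $\ell^2(\Z^{\mathrm{d}})$ picture) leaves the convolution $\hat{\mathbf{v}}$ invariant, and by \eqref{echiladigan} it sends $E_k$ into $2\mathrm{d}\mu(0)-E_k$, since $\cE_{\min}(k)+\cE_{\max}(k)=2\mathrm{d}\mu(0)$. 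Hence $W^{*}\hat h^{\mathrm{d}}(k)W=2\mathrm{d}\mu(0)-(\hat h_0^{\mathrm{d}}(k)-\hat{\mathbf{v}})$, so the eigenvalues of $h^{\mathrm{d}}(k)$ above $\cE_{\max}(k)$ are in one-to-one correspondence with those of $\hat h_0^{\mathrm{d}}(k)-\hat{\mathbf{v}}$ below $\cE_{\min}(k)$ — an operator of the same form with $v$ replaced by $-v$. It therefore suffices to prove finiteness of the part of the discrete spectrum below $\cE_{\min}(k)$, for an arbitrary sign of the potential.

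Next I would translate the minimum of $E_k$ to the origin by the unitary $U$ carrying $q$ to $q+p(k)$, which again commutes with the convolution $\hat{\mathbf{v}}$, and work with
\begin{equation*}
U^{*}\big(\hat h^{\mathrm{d}}(k)-\cE_{\min}(k)\big)U=\big(E_k(\cdot+p(k))-\cE_{\min}(k)\big)+\hat{\mathbf{v}},
\end{equation*}
whose negative eigenvalues are precisely the eigenvalues of $h^{\mathrm{d}}(k)$ below $\cE_{\min}(k)$. The left inequality in \eqref{E ineq}, namely $A(k)\,E_0(\cdot)\le E_k(\cdot+p(k))-\cE_{\min}(k)$, gives the operator bound
\begin{equation*}
U^{*}\big(\hat h^{\mathrm{d}}(k)-\cE_{\min}(k)\big)U\ \ge\ A(k)\,\hat h_0^{\mathrm{d}}(0)+\hat{\mathbf{v}}.
\end{equation*}
Since $A(k)>0$, the min--max principle bounds the number of negative eigenvalues on the left by that of $A(k)\hat h_0^{\mathrm{d}}(0)+\hat{\mathbf{v}}$, which, after dividing by $A(k)$, equals the number of eigenvalues of $\hat h_0^{\mathrm{d}}(0)+A(k)^{-1}\hat{\mathbf{v}}$ below $\cE_{\min}(0)=0$; here the compactness of $\hat{\mathbf{v}}$ secured by assumption (A) keeps the essential spectrum fixed, so the counting is legitimate.

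It then remains to note that $A(k)^{-1}v$ again satisfies Hypothesis \ref{hypo}, so the dominating operator $\hat h_0^{\mathrm{d}}(0)+A(k)^{-1}\hat{\mathbf{v}}$ is a zero-momentum Hamiltonian with an admissible potential, and by the very content of Hypothesis \ref{hypo} it has only finitely many eigenvalues below $0$. Combined with the symmetric estimate at the upper edge this yields the finiteness of the whole discrete spectrum. The step I expect to be the real obstacle is not the min--max comparison but the base finiteness for the zero-momentum operator with the \emph{rescaled} coupling $A(k)^{-1}$: the passage from $h^{\mathrm{d}}(k)$ to $h^{\mathrm{d}}(0)$ through the lower bound in \eqref{E ineq} inevitably inflates the potential, so one must check that finiteness survives this inflation. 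What makes it survive is exactly the hypothesis $A(k)\neq0$, which forces every $r(k^{(j)})>0$ and hence a genuinely $\mathrm{d}$-dimensional nondegenerate minimum of $E_k$ vanishing to the same order as $E_0$; the threshold resolvent of $\hat h_0^{\mathrm{d}}(0)$ then carries only the integrable singularity $\sim|q|^{-2}$, and together with the decay of $v$ this renders the Birman--Schwinger operator at the threshold compact, ruling out accumulation of eigenvalues at $\cE_{\min}(k)$. The degenerate regime $A(k)=0$, excluded here, is precisely the one where the minimum flattens in some directions and the problem descends to a lower-dimensional face, the situation governed by Hypothesis \ref{hypo}(B) and treated in the companion Theorem \ref{main3}.
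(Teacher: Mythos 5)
Your working core is the paper's own proof: the same shift unitary $q\mapsto q+p(k)$, the same two-sided symbol bound \eqref{E ineq} promoted to an operator inequality (the paper's \eqref{INeq}), the same min--max comparison with a zero-quasimomentum operator, and the same staggering transformation reducing the eigenvalues above $\cE_{\max}(k)$ to eigenvalues below $\cE_{\min}(k)$ of the operator with potential $-v$. You are in fact more explicit than the paper on the one delicate point: the comparison inevitably inflates the coupling. The paper's \eqref{INeq} contains the rescaled potential $A(k)\hat{\mathbf v}$, so as written it bounds the eigenvalue count of $h_0^{\mathrm d}(k)+A(k)\mathbf v$ rather than of $h^{\mathrm d}(k)=h_0^{\mathrm d}(k)+\mathbf v$; turning it into the stated theorem requires invoking Hypothesis \ref{hypo} for the inflated potential $A(k)^{-1}v$, exactly as you do.

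However, the closing argument you add to justify that finiteness ``survives the inflation'' --- compactness of the threshold Birman--Schwinger operator --- is wrong, and you should drop it. First, the $\sim|q|^{-2}$ threshold singularity of the resolvent of $\hat h_0^{\mathrm d}(0)$ is integrable only for $\mathrm d\ge 3$, while the theorem is stated for all $\mathrm d\ge 1$; in dimensions one and two no such bound can hold, since by Theorem \ref{DSKH} every nonzero potential there produces discrete spectrum, so the Birman--Schwinger norm cannot remain uniformly small up to the threshold. Second, even for $\mathrm d\ge 3$, assumption (A) alone ($v(x)\to 0$) gives neither compactness of the threshold Birman--Schwinger operator nor finiteness: a negative potential decaying like $|x|^{-\alpha}$ with $\alpha<2$ satisfies (A) yet produces infinitely many eigenvalues accumulating at the bottom of the essential spectrum. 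Indeed, if your argument were valid, the finiteness required by Hypothesis \ref{hypo} would follow from (A) and the hypothesis would be superfluous. The honest statement --- both for your write-up and, implicitly, for the paper's own proof --- is that Hypothesis \ref{hypo} must be read as a property of the decay class of $v$, stable under multiplication by constants (as it is for the exponentially decaying potentials of Remark \ref{hypoq}), and then applied to $A(k)^{-1}v$; it is an assumption, not something derivable from (A) plus the nondegeneracy $A(k)\neq 0$.
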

 \begin{proof}
Let be $U_kf(p)=f(p+p(k))$ is the shift operator on $\T^{\mathrm{d}}$ and $A(k)\neq 0$. Using the inequality \eqref{E ineq} one can get the inequalities
\begin{equation*}
    U_k^*\big( \hat h^{\mathrm{d}}_0(k) -{\cE }_{\min}(k)I_{\mathrm{d}}\big)U_k\leq
    \hat h^{\mathrm{d}}_0(0)\leq \frac{1}{A(k)}U_k^* \big (
    \hat h^{\mathrm{d}}_0(k)-{\cE }_{\min}(k)I_{\mathrm{d}}
     \big )U_k.
    \end{equation*}
Consequently, we obtain
\begin{equation}\label{INeq}
    U_k^*  \big( h^{\mathrm{d}}(k)-{\cE }_{\min}(k)I_{\mathrm{d}}\big)U_k\leq
    \hat h^{\mathrm{d}}(0)\leq \frac{1}{A(k)} U_k^*\big (
     \hat h_0^{\mathrm{d}}(k) + A(k)\mathbf{\hat{v}}-{\cE }_{\min}(k)I_{\mathrm{d}}
     \big )U_k,
    \end{equation}
since $U_k^*\mathbf{v}U_k=\mathbf{v}$.

Now we define a unitary
staggering transformation $U$ (see \cite{Faria}), which plays a key role in
understanding the relation between the parts of the discrete spectrum  $\sigma_{disc}(h^{\mathrm{d}}(k))$
lying below and above $\sigma_{ess}(h^{\mathrm{d}}(k))$ of $(h^{\mathrm{d}}(k))$
$$
(Uf)(x)=(-1)^{\sum_{j=1}^{\mathrm{d}} x^{(j)}}f(x),\quad f \in \ell^2(\Z^{\mathrm{d}}).
$$

The transformation $U$ has the important intertwining property
\begin{equation*}\label{Uh}
    h^{\mathrm{d}}_0(k)+\mathbf{v}=U^{-1}\big( -1(4\mathrm{d}(m_1^{-1}+m_2^{-1})+h^{\mathrm{d}}_0(k)-\mathbf{v}
        \big )U
\end{equation*}

This equality shows that it is enough to consider the discrete spectrum below the essential spectrum of $h^{\mathrm{d}}_0(k)+\mathbf{v}$.

This facts,  inequalities \ref{INeq},  Hypothesis \ref{hypo} and  the min-max principle yields the statement of the theorem.
\end{proof}

To get oriented, let us recall some results and facts
 relevant to our discussion.

In one- and two- dimensional case the following slightly changed form of Theorem 4.5 in \cite{DHKS} hold and  its proof is omitted.
\begin{theorem}\label{DSKH} Assume Hypothesis \ref{hypo} and $A(k)\neq 0$.
 Let $\mathrm{d} = 1$ or $ 2$. If $\sigma(h^\mathrm{d}(k))\subset [{\cE }_{\min}(k),{\cE }_{\max}(k)] $, then $\mathbf{v} = 0$.
\end{theorem}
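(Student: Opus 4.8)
The plan is to reduce the claim to a threshold analysis at the bottom of the band for both signs of the potential, and then to exploit the divergence of the free resolvent at that threshold, which is special to $\mathrm{d}=1,2$. First, since $\hat{\mathbf v}$ is Hilbert--Schmidt the essential spectrum is exactly $[\cE_{\min}(k),\cE_{\max}(k)]$, so the hypothesis $\sigma(h^{\mathrm{d}}(k))\subset[\cE_{\min}(k),\cE_{\max}(k)]$ says precisely that $h^{\mathrm{d}}(k)$ has no eigenvalue below $\cE_{\min}(k)$ and none above $\cE_{\max}(k)$. The staggering identity used in the proof of Theorem \ref{h(0)<h(k)} conjugates $h_0^{\mathrm{d}}(k)+\mathbf v$ into a reflected copy of $h_0^{\mathrm{d}}(k)-\mathbf v$; under it an eigenvalue above $\cE_{\max}(k)$ of the former becomes an eigenvalue below $\cE_{\min}(k)$ of the latter. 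Hence the hypothesis is equivalent to the assertion that \emph{neither} $h_0^{\mathrm{d}}(k)+\mathbf v$ \emph{nor} $h_0^{\mathrm{d}}(k)-\mathbf v$ has discrete spectrum below $\cE_{\min}(k)$, and it suffices to show that in dimension one or two this forces $\mathbf v=0$.

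Second, I would pass to the Friedrichs representation and shift by $p(k)$ through $U_k$, so that by \eqref{represent} the shifted dispersion $W(p):=E_k(p+p(k))-\cE_{\min}(k)=\sum_{j=1}^{\mathrm{d}}r(k^{(j)})(1-\cos p^{(j)})$ is nonnegative and, because $A(k)\neq0$ guarantees every $r(k^{(j)})>0$, vanishes nondegenerately and only at $p=0$, where $W(p)\asymp|p|^2$. The decisive low-dimensional input is then the divergence
\begin{equation*}
\int_{\T^{\mathrm{d}}}\frac{dp}{W(p)}=+\infty,\qquad \mathrm{d}=1,2,
\end{equation*}
which fails for $\mathrm{d}\ge3$; it expresses that the free operator has a virtual level at the bottom of its band (and, by staggering, at the top).

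Third, I would run a Birman--Schwinger argument at the threshold. For $z<\cE_{\min}(k)$ the operator $\hat h_0^{\mathrm{d}}(k)-z$ is positive, and the symmetrised operator $K(z)=(\hat h_0^{\mathrm{d}}(k)-z)^{-1/2}\,\hat{\mathbf v}\,(\hat h_0^{\mathrm{d}}(k)-z)^{-1/2}$ is self-adjoint and compact; $h_0^{\mathrm{d}}(k)+\mathbf v$ has an eigenvalue below $\cE_{\min}(k)$ exactly when $\inf\sigma(K(z))\le-1$ for some such $z$, and $h_0^{\mathrm{d}}(k)-\mathbf v$ exactly when $\sup\sigma(K(z))\ge1$. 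Testing the quadratic form of $K(z)$ against states $\phi$ concentrated at the band minimum, one has $\langle K(z)\phi,\phi\rangle$, which in the position representation equals $\sum_{x}v(x)|\psi(x)|^2$ with $\psi=(\hat h_0^{\mathrm{d}}(k)-z)^{-1/2}\phi$ a state spread out over a large region with nearly constant modulus; as $z\uparrow\cE_{\min}(k)$ the resolvent weight blows up by the divergence above while the potential factor converges to a multiple of the first Born quantity $\sum_x v(x)$. If $\sum_x v(x)\neq0$ this drives $\inf\sigma(K(z))$ below $-1$ or $\sup\sigma(K(z))$ above $1$, producing a bound state for one of the two signs and contradicting the hypothesis; thus necessarily $\sum_x v(x)=0$.

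The main obstacle is the borderline two-dimensional case, together with the degenerate case $\sum_x v(x)=0$. In $\mathrm{d}=2$ the kinetic and potential contributions to the trial form are both of order $N^{-2}$ for a box-type envelope of side $N$, so a crude cutoff does not decide the sign; one must instead use logarithmically tuned, capacity-type envelopes whose kinetic cost is only $O(1/\log N)$, as in the two-dimensional weak-coupling analysis. When the first Born term vanishes, $\sum_x v(x)=0$, one must expand to second order and show that the bilinear form $\sum_{x,y}v(x)\,G_0(x,y)\,v(y)$ built from the threshold Green's function of $\hat h_0^{\mathrm{d}}(k)$ has a definite, nonzero sign for one choice of $\pm\mathbf v$ unless $v\equiv0$, which again produces a bound state. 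Carrying out these two refinements rigorously is precisely the content of Theorem 4.5 of \cite{DHKS}, and at that stage I would invoke their variational estimates rather than reproduce the delicate threshold asymptotics.
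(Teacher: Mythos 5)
The paper offers no proof of this theorem at all: it states it as a ``slightly changed form of Theorem 4.5 in \cite{DHKS}'' and explicitly omits the proof, deferring entirely to that reference. Your proposal --- staggering to convert the two-sided hypothesis into absence of bound states below $\cE_{\min}(k)$ for both $h_0^{\mathrm{d}}(k)+\mathbf{v}$ and $h_0^{\mathrm{d}}(k)-\mathbf{v}$, the shift $U_k$ reducing the kinetic part to the anisotropic dispersion $W(p)=\sum_j r(k^{(j)})(1-\cos p^{(j)})$ with $r(k^{(j)})>0$ (this is exactly the ``slight change'' relative to \cite{DHKS}), and then the low-dimensional threshold analysis with the genuinely hard cases ($\mathrm{d}=2$ borderline, $\sum_x v(x)=0$) delegated to the variational estimates of \cite{DHKS} --- is precisely the argument that citation encodes, so you are on essentially the same route as the paper, with more of the reduction made explicit than the paper itself provides.
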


 This is connected to the fact that the discrete and continuous one-particle Schr\"odinger operators in one and two dimensions always have a bound state for nontrivial  potentials (see relevant discussions \cite{AbLakUS}, \cite{AlLakAb} and \cite{Kl},\cite{LL},\cite{BS1}).

{In three and upper dimensional case} for the potentials satisfying Hypothesis \ref{hypo} in
$\mathrm{d}\geq 3$-dimensional case
 two-particle discrete Schr\"odinger operators
$h^{\mathrm{d}}(k),$  has only finite number  of eigenvalues outside
the essential spectrum. Moreover, for sufficiently small
potentials, the corresponding operator need not have bound states (see \cite{AlLakAb}).

This fact follows  from the uniformly boundedness of the norm of the
Birman-Schwinger compact operator $G(k,z)=
-({r_0^{\mathrm{d}}}(k,z))^{\frac{1}{2}}
\mathbf{v}({r_0^{\mathrm{d}}}(k,z))^{\frac{1}{2}},$  $r_0^{\mathrm{d}}(k,z)=(
h_0^{\mathrm{d}}(k)-zI_\mathrm{d}
)^{-1},$ $z<\cE_{min}(k),$ near
the bottom of the essential spectrum of $h^\mathrm{d}(k).$ Moreover,
in this case  the discrete spectrum $\sigma_{\mathrm{disc}}(h^{\mathrm{d}}(k))$ of
$h^{\mathrm{d}}(k)$  is empty, since  for sufficiently small
potentials the norm of the Birman-Schwinger operator $G(k,z),$
$z<\cE_{min}(k),$  is small near the bottom of the essential
spectrum of $h^\mathrm{d}(k).$
\subsection{Case $A(k)=0$}
 The equality $A(k)=0$ implies  this equality  $m_1=m_2$ and $k\in M=\T^{\mathrm{d}}\setminus (-\pi,\pi)^{\mathrm{d}}$.

For any $\{j\}\in \mathcal{A}_1^{\mathrm{d}}$, $\boldsymbol\alpha \in \mathcal{A}_l^{\mathrm{d}}$, $l\in N_\mathrm{d}$, and $n\in \N$
 we say the set
$$
\Pi_n^{\mathcal{\mathrm{d}}-1}({{\{j\}}})=\bigcup_{ x\in\{-n,\dots,n\}}\Z_{x}^{\mathrm{d}-1}(\overline{{\{j\}}}),
$$
 resp.
 $$
\Pi_n^{\mathrm{d}-l}({\boldsymbol\alpha})= \bigcap_{\alpha_j \in \boldsymbol\alpha}
\Pi_n^{\mathrm{d}-1}({{\{\alpha_j\}}}),
$$
is the $\mathrm{d}-1$ resp. $\mathrm{d}-l$ dimensional lattice strip on $\Z^{\mathrm{d}}$, along  direction  corresponding
to $\overline{\{j\}}$ resp. $\bar{\boldsymbol\alpha}$.

Define
$$
X_n({\boldsymbol\alpha})=\{x\in \Z^{l}({\boldsymbol\alpha}):\quad x^{(j )}\in \{-n,\dots,n\}\quad \mbox{for all}\quad j\in  {\boldsymbol\alpha} \},\quad n\in \N^1,
$$
the subset (the lattice cube with side $n$) of $\Z^{l}({\boldsymbol\alpha}).$

One can check that for any $\boldsymbol\alpha \in \mathcal{A}_l^{\mathrm{d}}$, $l\in N_{\mathrm{d}}$,
$$
\Pi_n^{\mathrm{d}-l}({\boldsymbol\alpha})=
\bigcup_{\hat{x}\in
X_n({\boldsymbol\alpha})}
\Z^{\mathrm{d}-l}_{\hat x}(\bar{\boldsymbol\alpha})\quad \mbox{and}\quad
\Pi_n^{\mathrm{d}-l}({\boldsymbol\alpha})=
\Z^{\mathrm{d}-l}(\bar{\boldsymbol\alpha}) \oplus X_n({\boldsymbol\alpha})
$$
and in particular case $l=\mathrm{d}$ the equality $\Pi_n^{\mathrm{d}-l}({\boldsymbol\alpha})=X_n({\boldsymbol\alpha})$ holds and
$\Pi_n^{\mathrm{d}-l}({\boldsymbol\alpha})$ is the lattice cube with side $n$ in $\Z^{\mathrm{d}}$.

\begin{theorem}\label{main3} Assume Hypothesis \ref{hypo}. The for any $l\in N_{\mathrm{d}}$,
 $\boldsymbol\alpha \in \mathcal{A}_l^{\mathrm{d}}$,  and for all
$k\in M_{l}^\mathrm{d}(\boldsymbol\alpha)$
we have:\\
a) if $\mathrm{d}-2\leq l\leq
\mathrm{d}$ then the
operator $h^{\mathrm{d}}(k)$ has an infinite number of eigenvalues
outside of the essential spectrum if and only if for any $n\in \N$ the
relation
$$\supp  v \nsubseteq \Pi_n^{\mathrm{d}-l}({\boldsymbol\alpha})
 $$
is  accomplished.\\
b) if $1\leq l<\mathrm{d}-2$ then the
operator $h^{\mathrm{d}}(k)$ has a finite number of eigenvalues
outside of the essential spectrum.
\end{theorem}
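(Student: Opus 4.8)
The strategy is to exploit the fact that, for $A(k)=0$, the dispersion relation degenerates in the $\boldsymbol\alpha$-directions, and to convert this degeneracy into a decoupling of the Birman--Schwinger operator into independent one-slice pieces. First I would record the structural consequences of $m_1=m_2$ and $k\in M_l^{\mathrm{d}}(\boldsymbol\alpha)$. Since $\mu(\pi)=m_1^{-1}-m_2^{-1}=0$, one has $r(k^{(j)})=0$ for every $j\in\boldsymbol\alpha$, so \eqref{echiladigan} shows that $E_k(q)$ is independent of the coordinates $q^{(j)}$, $j\in\boldsymbol\alpha$, and that $\cE_{\min}(k)$ is attained on the whole $l$-dimensional subtorus in these directions. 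Writing $x=(\hat x,\check x)$ under $\ell^2(\Z^{\mathrm{d}})=\ell^2(\Z^{l}(\boldsymbol\alpha))\otimes\ell^2(\Z^{\mathrm{d}-l}(\bar{\boldsymbol\alpha}))$, this yields $h_0^{\mathrm{d}}(k)=l\mu(0)I+I\otimes h_0^{\mathrm{d}-l}(\tilde k)$, where $\tilde k$ denotes the restriction of $k$ to $\bar{\boldsymbol\alpha}$; note that no coordinate of $\tilde k$ equals $\pi$, so the $(\mathrm{d}-l)$-dimensional analogue of $A$ is nonzero. As in the proof of Theorem \ref{h(0)<h(k)} I would first apply the staggering transformation $U$, reducing the whole problem to counting eigenvalues below $\cE_{\min}(k)$.

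Next I would compute the kernel of the free resolvent $r_0^{\mathrm{d}}(k,z)$ for $z<\cE_{\min}(k)$. Because $E_k$ does not depend on $\hat q$, integrating over the $\boldsymbol\alpha$-momenta produces a factor $\delta_{\hat x,\hat y}$, so the kernel equals $\delta_{\hat x,\hat y}\,g_z(\check x-\check y)$, with $g_z$ the $(\mathrm{d}-l)$-dimensional free resolvent kernel. As $\mathbf v$ is a multiplication operator it is diagonal in $\hat x$ as well; hence $G(k,z)$ splits as an orthogonal sum $\bigoplus_{\hat a\in\Z^{l}(\boldsymbol\alpha)}G^{(\hat a)}(z)$, where $G^{(\hat a)}(z)$ is exactly the Birman--Schwinger operator of the slice operator $h^{\mathrm{d}-l}(\tilde k)$ with the restricted potential $v_{\hat a}=v(\hat a,\cdot)$ on $\Z^{\mathrm{d}-l}_{\hat a}(\bar{\boldsymbol\alpha})$. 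By the Birman--Schwinger principle the number of eigenvalues of $h^{\mathrm{d}}(k)$ below $\cE_{\min}(k)$ therefore equals $\sum_{\hat a}n(\hat a)$, where $n(\hat a)$ counts the eigenvalues of the $\hat a$-slice operator below its threshold (which, after the shift $l\mu(0)$, is again $\cE_{\min}(k)$). Each $n(\hat a)$ is finite: $v_{\hat a}$ is one of the restricted potentials of Hypothesis \ref{hypo}(B), and combining (B) with the nondegeneracy of $\tilde k$ and the comparison \eqref{E ineq} — i.e. Theorem \ref{h(0)<h(k)} applied in dimension $\mathrm{d}-l$ — forces finiteness slice by slice. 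Moreover $n(\hat a)=0$ whenever the slice misses $\supp v$, i.e. whenever $v_{\hat a}=0$.

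For part (a) ($\mathrm{d}-l\le 2$) I would argue that the slices meeting $\supp v$ are finite in number precisely when $\supp v\subseteq\Pi_n^{\mathrm{d}-l}(\boldsymbol\alpha)$ for some $n$, so that the stated condition ``$\supp v\nsubseteq\Pi_n^{\mathrm{d}-l}(\boldsymbol\alpha)$ for all $n$'' is equivalent to infinitely many $\hat a$ carrying a nonzero $v_{\hat a}$. The decisive point is that in transverse dimension $\mathrm{d}-l\in\{0,1,2\}$ every nonzero slice contributes at least one eigenvalue: for $\mathrm{d}-l\in\{1,2\}$ this is Theorem \ref{DSKH}, while for $\mathrm{d}-l=0$ the slice operator is the scalar $\cE_{\min}(k)+v(\hat a)$, which lies off the one-point essential spectrum as soon as $v(\hat a)\neq0$. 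Hence $\sum_{\hat a}n(\hat a)$ is infinite if and only if infinitely many slices meet $\supp v$, which is exactly the claimed dichotomy; the staggering symmetry accounts simultaneously for the eigenvalues above and below the essential spectrum.

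For part (b) ($\mathrm{d}-l\ge 3$) the same sum must be shown to be finite. Now each slice is a genuinely $(\ge 3)$-dimensional lattice problem, the threshold singularity $g_{\cE_{\min}}(\check z)\sim|\check z|^{-(\mathrm{d}-l-2)}$ is locally integrable, and there is a real coupling threshold, so a slice can bind only if $v_{\hat a}$ is not too small. The plan is to bound $n(\hat a)\le C\sum_{\check x}|v_{\hat a}(\check x)|^{(\mathrm{d}-l)/2}$ by a Cwikel--Lieb--Rozenblum-type inequality and to sum, giving $\sum_{\hat a}n(\hat a)\le C\sum_{x\in\Z^{\mathrm{d}}}|v(x)|^{(\mathrm{d}-l)/2}$. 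The main obstacle is exactly this last step: assumption (A) yields $\|v_{\hat a}\|_\infty\to0$ as $|\hat a|\to\infty$, but a well of shrinking depth and growing width spread along a $\boldsymbol\alpha$-direction shows that mere vanishing of $v$ is borderline and can still produce infinitely many binding slices; what makes the sum converge — and hence $\sigma_{\mathrm{disc}}(h^{\mathrm{d}}(k))$ finite — is the quantitative decay of $v$, for instance the exponential decay of Remark \ref{hypoq}. I would therefore organize the proof of (b) so that (A) enters precisely through the convergence of $\sum_x|v(x)|^{(\mathrm{d}-l)/2}$.
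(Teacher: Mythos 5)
Your slice decomposition is, in substance, the paper's own proof: the paper shows that for $k\in M_l^{\mathrm{d}}(\boldsymbol\alpha)$ (equal masses, $A(k)=0$) the subspaces $\ell^2(\Z^{\mathrm{d}-l}_{\hat x}(\bar{\boldsymbol\alpha}))$ are invariant under both $h_0^{\mathrm{d}}(k)$ and $\mathbf v$, and writes $h^{\mathrm{d}}(k)$ as the direct sum \eqref{deric.oper} of the slice operators $l\mu(0)+h^{\mathrm{d}-l}_{\hat x}(\tilde k)$, so that $\sigma_{disc}(h^{\mathrm{d}}(k))$ is the union over $\hat x$ of the slice discrete spectra; your Birman--Schwinger decoupling is a reformulation of the same fact, and your treatment of part (a) -- Theorem \ref{DSKH} in transverse dimension $1,2$, the scalar operator $\mathrm{d}\mu(0)+\mathbf v$ for $l=\mathrm{d}$, and the observation that infinitely many slices carry a nonzero restricted potential iff $\supp v \nsubseteq \Pi_n^{\mathrm{d}-l}(\boldsymbol\alpha)$ for every $n$ -- is exactly the paper's.

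The gap is part (b), and you have put your finger on precisely the right spot; but you should know that the difficulty you flag is not a defect of your plan alone -- it is a gap in the paper's own proof. The paper argues: by Hypothesis \ref{hypo}(A), $|v(x)|<\epsilon$ outside a strip, and then invokes its earlier small-potential Birman--Schwinger discussion to conclude that all slices $\hat x$ outside some finite cube $X_n(\boldsymbol\alpha)$ have empty discrete spectrum. That inference is invalid: in dimension $\mathrm{d}-l\geq 3$ the Birman--Schwinger norm is not controlled by $\|v_{\hat x}\|_\infty$; a well of depth $\epsilon_{\hat x}$ and radius $R_{\hat x}$ binds as soon as $\epsilon_{\hat x}R_{\hat x}^2$ exceeds a fixed constant, no matter how small $\epsilon_{\hat x}$ is. Moreover, Hypothesis \ref{hypo}(B) does not close the hole: placing in the slice $\hat x=t$ a well of depth $\sim t^{-1/2}$ and radius $\sim t^{3/8}$ whose center drifts to transverse distance $\sim t^{1/2}$, one obtains a potential satisfying (A) and (B) -- every coordinate slice occurring in (B) meets only finitely many wells, hence carries a finitely supported restriction with finite discrete spectrum -- while infinitely many of the theorem's transverse slices bind, each contributing an eigenvalue below $\cE_{\min}(k)$. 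So statement (b) appears to be false as written, and no argument from (A)+(B) alone can establish it. Your proposed repair -- a lattice Cwikel--Lieb--Rozenblum bound $n(\hat a)\leq C\sum_{\check x}|v_{\hat a}(\check x)|^{(\mathrm{d}-l)/2}$ summed over slices -- is the right mechanism, but it requires the summability $\sum_{x}|v(x)|^{(\mathrm{d}-l)/2}<\infty$ as an additional hypothesis (it holds under the exponential decay of Remark \ref{hypoq}, but does not follow from Hypothesis \ref{hypo}), and you correctly acknowledge that you cannot derive it from (A); neither can the paper.
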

\begin{proof}
To be simplicity  we take $\bar{\boldsymbol\alpha}=\{1,\dots, \mathrm{d}-l\}$,
${\boldsymbol\alpha}=\{\mathrm{d}-l+1,\dots, \mathrm{d}\}$
and we
use notations $\tilde k=(k^{(1)},\dots,k^{(\mathrm{d}-l)}) \in
\T^{\mathrm{d}-l}$ for $k=(\tilde{k},\pi,\dots,\pi)
\in  M_{l}^\mathrm{d}(\boldsymbol\alpha),$  and
$x=(\tilde{x},\hat{x})\in \Z^{\mathrm{d}}$ for $\tilde{x}\in \Z^{\mathrm{d}-l}(\bar{\boldsymbol\alpha}),$
$\,\hat{x}\in \Z^{l}({\boldsymbol\alpha})$.

Clearly
\begin{equation*}\label{direct}
\ell^2(\Z^\mathrm{d})=
\bigcup_{\hat{x}\in \Z^{l} }\oplus\ell^2(\Z^{\mathrm{d}-l}_{\hat x}(\bar{\boldsymbol\alpha}))
\end{equation*}
and $\ell^2(\Z^{\mathrm{d}-l}_{\hat{x}}(\bar{\boldsymbol\alpha})),$ $\hat{x}\in \Z^l,$ is isomorphic to
$\ell^2(\Z^{\mathrm{d}-l }).$ This isomorphism is established with the
restriction $P_{\hat{x}}\equiv P\big |_{\ell^2(\Z^{\mathrm{d}-l}_{\hat{x}}(\bar{\boldsymbol\alpha})) }$ of the
isometric operator $P:\ell^2(\Z^{\mathrm{d}})\to \ell^2(\Z^{\mathrm{d}-l}_{\hat{x}}(\bar{\boldsymbol\alpha}))$:
\begin{align*}
&(P f)(\tilde{x})= f(\tilde{x},\hat{x}), \quad  f \in \ell^2(\Z^{\mathrm{d}}),
\end{align*}
on  $\ell^2(\Z^{\mathrm{d}-l}_{\hat{x}}(\bar{\boldsymbol\alpha}))$.

If $k \in M_{l}^\mathrm{d}(\boldsymbol\alpha)$, then
\begin{align*} \label{h0(k)}
 h_0^{\mathrm{d}}(k)=\frac{1}{2}
  \sum_{j=1}^{\mathrm{d}-l}
  \big({2}\mu(0)I_{\mathrm{d}}- \mu(k^{(j)}) T(\vec{e}_j)-\overline{\mu(k^{(j)})} T^*(\vec{e}_j)  \big)
  +l\mu(0)I_{\mathrm{d}}.
\end{align*}

Using the last equality it is easy to see that
 the Hilbert space $\ell^2(\Z^{\mathrm{d}-l}_{\hat{x}}(\bar{\boldsymbol\alpha}))$ is
invariant under $h^\mathrm{d}_0(k)$ and $\mathbf{v}.$

One can
check that
 $ h^{\mathrm{d}}(k)$ is described as
\begin{equation}\label{deric.oper}
 h^{\mathrm{d}}(k)=\sum_{x\in \Z^l} \oplus P_{\hat{x}}^*(l\mu(0)I_{\mathrm{d}-l}+h_{\hat{x}}^{\mathrm{d}-l}(\tilde{k}))
P_{\hat{x}},
\end{equation}
where
$h_{\hat{x}}^{\mathrm{d}-l}(\tilde{k})$ is the $\mathrm{d}-l$-dimensional discrete
Schr\"odinger operator corresponding  to the Hamiltonian of a
system of two identical particles  moving on  $\Z^{\mathrm{d}-l}$ and
interacting short-range attractive potential ${\mathbf{v}}_{\hat{x}},$ being
multiplication operator by the restriction ${v}_{\hat{x}}(\cdot),$
${v}_{\hat{x}}:{\Z}^{\mathrm{d}-l}(\bar{\boldsymbol\alpha})\to \R^1, $ of the function $v(\cdot)$ on  ${\Z}^{\mathrm{d}-l}_{\hat{x}}(\bar{\boldsymbol\alpha})\simeq {\Z}^{\mathrm{d}-l}.$

That is  we decompose $h^{\mathrm{d}}(k)$
a direct sum of $\mathrm{d}-l$ dimensional two-particle
Schr\"odinger operators
$$
h^{\mathrm{d}-l}_{\hat{x}}(\tilde{k})=h^{\mathrm{d}-l}_0(\tilde{k})+
\mathbf{v}_{\hat{x}},\,
 \tilde k\in \T^{\mathrm{d}-l},\,
 \hat{x}\in \Z^l,
 $$
 with  the
potential $\mathbf{v}_{\hat{x}},$ $\hat{x}\in \Z^{l}:$
\begin{equation}\label{deric.oper1}
h^{\mathrm{d}}(k)\simeq \sum_{\hat{x}\in \Z^l}
 \oplus
\big
(l\mu(0)I_{\mathrm{d}-l}+h_0^{\mathrm{d}-l}(\tilde{k})+\mathbf{v}_{\hat{x}}\big
).
\end{equation}

Since \eqref{deric.oper} and  $h_0^{\mathrm{d}-l}(\tilde k)$ does not depend
on $x$ and $\mathbf{v}_{\hat{x}}$ is a compact operator for the essential
spectrum of $h^{\mathrm{d}}(k)$ we  get
\begin{equation*}\label{spectr0}
    \sigma_{ess}\big ( h^{\mathrm{d}}(k)\big )=l\mu(0)+\bigcup_{x\in \Z^l}
    \sigma_{ess}\big ( h^{\mathrm{d}-l}_{\hat{x}}(\tilde{k})\big )=
    [\cE_{min}(k),\cE_{max}(k)]
\end{equation*}
and hence
\begin{equation}\label{spectr}
    \sigma_{disc}\big ( h^{\mathrm{d}}(k)\big )=l\mu(0)+\bigcup_{x\in \Z^l}
    \sigma_{disc}\big ( h^{\mathrm{d}-l}_{\hat{x}}(\tilde{k})\big ).
\end{equation}

a) Case $l=\mathrm{d}.$ Then $M_{\mathrm{d}}^\mathrm{d}(\boldsymbol\alpha)=\{\vec{\pi} \},$ $\vec{\pi}=(\pi,\dots,\pi)\in \T^{\mathrm{d}}$ and
$h^{\mathrm{d}}(\vec{\pi})=\mathrm{d}\mu(0)+\mathbf{v}.$
Hence $\sigma_{disc}\big ( h^{\mathrm{d}}(\vec{\pi})\big )=\mathrm{d}\mu(0)+
\cup_{x\in \Z^{\mathrm{d}}}\{ v(x)\}.
$

Consequently, $\sigma_{disc}\big ( h^{\mathrm{d}}(\vec{\pi})\big )$ is infinite
iff for any $n\in \N^n$ the  support  $\mathrm{supp} v  $ of $v(\cdot)$ does not belong in
$\Pi_n^{0}({\boldsymbol\alpha})$-the lattice cube on $\Z^{\mathrm{d}}$ with side $2n$.

Case  $\mathrm{d}-l=1$ or $=2$.  Then by Theorem \ref{DSKH} the operator
 $\sigma_{disc}( h^{\mathrm{d}-l}_{\hat{x}}(\tilde{k}))$ is  nonempty  if and only if the
 potential  $\mathbf{v}_{\hat{x}}$ is nonzero.

So by \eqref{spectr} the equality
$ \sharp \sigma_{disc}\big ( h^\mathrm{d}(k)\big )=\infty$ holds iff
for any $n\in \N$ there exists $\hat x \in X_n({\boldsymbol\alpha})$ such that
$\mathbf{v}_{\hat{x}}\neq 0 $, that is
$\mathrm{supp} v  $  does not belong in
$$
\Pi_n^{\mathrm{d}-l}({\boldsymbol\alpha})=
\bigcup_{\hat{x}\in
X_n({\boldsymbol\alpha})}
\Z^{\mathrm{d}-l}_{\hat x}(\bar{\boldsymbol\alpha}).
$$

b)  If $\mathrm{d}-l=3$ or $\mathrm{d}-l>3$ (that is $1\leq l< \mathrm{d}-2$) then by Theorem \ref{h(0)<h(k)}  the operator
 $ h^{\mathrm{d}-l}_{\hat{x}}(\tilde{k}),\,
 \tilde k\in \T^{\mathrm{d}-l},\,
 \hat{x}\in \Z^l,$  has no the infinite number  of eigenvalues
outside of the essential spectrum.

By virtue Hypothesis \ref{hypo} for any $\epsilon>0$ there exists $n\in \N^1$ such that for all $x\in  \bigcup_{-n\leq x\leq
n} \Z_{i,x}^\mathrm{d} $ the inequality
 $|v(x)|<\epsilon$ holds.

Then from the discussed facts  relating to
 the {emptiness} of the  (three and upper dimensional )
 two-particle discrete operators there exists $n\in \N^1$ such that for all $\hat{x} \in
  \Z^{\mathrm{d}} ({\boldsymbol\alpha})\setminus X_n^{\mathrm{d}-l}({\boldsymbol\alpha})
  $ the discrete spectr
  $\sigma_{disc}(h^{\mathrm{d}-l}_{\hat{x}}(\tilde{k}))$ is empty.

Hence
\begin{equation}\label{spectr}
    \sigma_{disc}\big ( h^{\mathrm{d}}(k)\big )=l\mu(0)+\bigcup_{x\in X_n^{\mathrm{d}-l}({\boldsymbol\alpha})}
    \sigma_{disc}\big ( h^{\mathrm{d}-l}_{\hat{x}}(\tilde{k})\big ).
\end{equation}
since \eqref{spectr}.

This fact and compactness of $ X_n^{\mathrm{d}-l}({\boldsymbol\alpha})$ ended the proof of Theorem \ref{main3}.
\end{proof}

\begin{remark}
In the case $k\in M^\mathrm{d}(\alpha_1,\dots,\alpha_l) $ the
Fourier symbol $E_k(\cdot)$ of the free one-particle discrete
operator $h_0^{\mathrm{d}}(k)$ attains its minimum along a $l$ dimensional
manifold homeomorphic to $\T^l.$ If this manifold is $\mathrm{d}-1$ or $\mathrm{d}$
dimensional then there exists potential, such that, generates
infinite discrete spectrum for the operator $h_0^{\mathrm{d}}(k).$

This result can  resemble with the result established in
\cite{Pan}. The authors of this paper provided an elemantary
proof that negative potential lead to infinite discrete spectrum
for the perturbation of Hamiltonian whose Fourier symbol attains
its minimal value on $n-1$ dimensional manifold of $\R^n.$
\end{remark}

\appendix \label{Appendix I}
\section{Example of the infiniteness of the discrete spectrum}

The main goal of this appendix is to show that
 in the case  $k\in \T^{\mathrm{d}}\setminus (-\pi,\pi)^{\mathrm{d}}$ the infiniteness of the discrete spectrum depends on the direction $\boldsymbol\alpha\in \mathcal{A}^{\mathrm{d}}_l.$
\begin{example} Let $\mathrm{d}=2$  and
$
v(x)=\left\{
       \begin{array}{ll}
         e^{-|x^{(1)}|}, &  x\in \Z^1\times \{0\} \\
         0, & \hbox{otherwise.}
       \end{array}
     \right.
$
\end{example}

 Then for $\boldsymbol\alpha=\{1\}\in \mathcal{A}^2_1$,  $n\in \N^1$,
we have
$$
\Pi_n^{\mathrm{d}-1}(\{1\})=\{x\in \Z^1: |x^{(1)}|\leq n\},\,
\Pi_n^{\mathrm{d}-1}(\{2\})=\{x\in \Z^1: |x^{(2)}|\leq n\},
$$
and obviously
$$
\supp v(\cdot) \nsubseteq \Pi_n^{\mathrm{d}-1}(\{1\})\quad \mbox{and}\quad      \supp v(\cdot) \subset
\Pi_n^{\mathrm{d}-1}(\{2\}).
$$

Then by theorem for any $k\in M_1^2(\{1\})$
the operator
 $h^{\mathrm{d}}(k)$ has a infinite number of eigenvalues
outside of the essential spectrum, but for $k\in M_2^2(\{2\})$
it has no infinite discrete spectrum.

{\bf Acknowledgments} I thank Prof. S.~N.~Lakaev   for useful discussions and
 gratefully acknowledge the hospitality of the The Abdus Salam International Centre for Theoretical Physics and I am also  indebted to the  the International Mathematical Union for the travel grant.
This work was also partially supported by the
Fundamental Science Foundation of Uzbekistan.

\end{document}